\theoremstyle{plain}
\newtheorem{theorem}                {Theorem}      [section]
\newtheorem{proposition}  [theorem]  {Proposition}
\newtheorem{lemma}        [theorem]  {Lemma}
\theoremstyle{definition}
\newtheorem{remark}       [theorem]  {Remark}
\numberwithin{equation}{section}
\def \R{{\mathbb R}}
\def \s{{\mathbb S}}
\DeclareMathOperator{\trace}{trace}
\numberwithin{equation}{section}
\begin{document}

\title[]{New examples of $r$-harmonic immersions into the sphere}

\author{S.~Montaldo}
\address{Universit\`a degli Studi di Cagliari\\
Dipartimento di Matematica e Informatica\\
Via Ospedale 72\\
09124 Cagliari, Italia}
\email{montaldo@unica.it}
%

\author{A.~Ratto}
\address{Universit\`a degli Studi di Cagliari\\
Dipartimento di Matematica e Informatica\\
Viale Merello 93\\
09123 Cagliari, Italia}
\email{rattoa@unica.it}

\begin{abstract}
Polyharmonic, or $r$-harmonic, maps are a natural generalization of harmonic maps whose study was proposed by Eells-Lemaire in 1983. The main aim of this paper is to construct new examples of proper $r$-harmonic immersions into spheres. In particular, we shall prove that the canonical inclusion $ i\,: \, \s^{n-1}(R)\, \hookrightarrow \, \s^n$ is a proper $r$-harmonic submanifold of $\s^n$ if and only if the radius $R$ is equal to $1 \slash \sqrt{r}$. We shall  also prove the existence of proper $r$-harmonic generalized Clifford's tori into the sphere.
\end{abstract}

\subjclass[2000]{Primary: 58E20; Secondary: 53C43.}

\keywords{harmonic maps, $r$-harmonic maps and submanifols}

\thanks{Work supported by P.R.I.N. $2015$ -- Real and Complex Manifolds: Geometry, Topology and Harmonic Analysis -- Italy}

\maketitle

\section{Introduction}\label{intro}

{\it Harmonic maps} are the critical points of the {\em energy} functional
\begin{equation}\label{energia}
E(\varphi)=\frac{1}{2}\int_{M}\,|d\varphi|^2\,dv_M \,\, ,
\end{equation}
where $\varphi:M\to N$ is a smooth map between two Riemannian
manifolds $M$ and $N$. In analytical terms, the condition of harmonicity is equivalent to the fact that the map $\varphi$ is a solution of the Euler-Lagrange equation associated to the energy functional \eqref{energia}, i.e.
\begin{equation}\label{harmonicityequation}
  - d^* d \varphi =   {\trace} \, \nabla d \varphi =0 \,\, .
\end{equation}
The left member of \eqref{harmonicityequation} is a vector field along the map $\varphi$ or, equivalently, a section of the pull-back bundle $\varphi^{-1} \, (TN)$: it is called {\em tension field} and denoted $\tau (\varphi)$. Also, we recall the well-known fact that, if $\varphi$ is an \textit{isometric} immersion, then $\varphi$ is a harmonic map if and only $\varphi(M)$ is a minimal submanifold of $N$ (we refer to \cite{EL1, EL83} for background on harmonic maps).
A related topic of growing interest deals with the study of the so-called {\it polyharmonic maps}, or {\it $r$-harmonic maps}: these maps, which provide a natural generalisation of harmonic maps, are the critical points of the $r$-energy functional (as suggested in \cite{EL83}, \cite{ES})
\begin{equation}\label{bienergia}
    E_r(\varphi)=\frac{1}{2}\int_{M}\,|(d^*+d)^r (\varphi)|^2\,dv_M\,\, .
\end{equation}
In the case that $r=2$, the functional \eqref{bienergia} is called bienergy and its critical points are the so-called biharmonic maps. There have been extensive studies on biharmonic maps (see \cite{Jiang, SMCO} for an introduction to this topic and \cite{MOR1, Mont-Ratto2, Mont-Ratto3} for an approach which is related to this paper).
In 1989 Wang \cite{Wang} studied the first variational formula of the $r$-energy funtional \eqref{bienergia}, while the expression for its second variation was derived in \cite{Maeta3}, where it was shown that a biharmonic map is not always $r$-harmonic ($r \geq≥ 3$) and, more generally, that an $s$-harmonic map is not always $r$-harmonic ($s < r$). On the other hand, any harmonic map is trivially $r$-harmonic for all $r\geq 2$. Therefore, we say that an immersed submanifold into a Riemannian manifold $N$ is a {\it proper} $r$-harmonic submanifold if the immersion is an $r$-harmonic map which is \textit{not} harmonic (or, equivalently, \textit{not} minimal). 

As a general fact, when the ambient has nonpositive riemannian curvature tensor there are several results which assert that, under suitable conditions, an $r$-harmonic submanifold is minimal (see \cite{Chen} and \cite{Maeta4}, for instance), but the Chen conjecture that a biharmonic submanifold of $\R^n$ must be minimal is still open. On the other hand, when the target has positive curvature very little is known when $r \geq4$ and it is natural to look for new examples. In this order of ideas, the main aim of this paper is to produce new proper $r$-harmonic submanifolds of the Euclidean sphere. Our paper is organized as follows: in Section~\ref{section-Maeta} we recall some basic facts and fundamental formulas concerning $r$-harmonic maps. Next, in Section~\ref{proofs} we provide the proof of our main results which we now state (to fix notation, we shall denote by $\s^n(R)$ the Euclidean sphere of radius $R$ and write $\s^n$ for $\s^n(1)$).
\begin{theorem}\label{Corollary-parallel-spheres} Assume that $r,n \geq 2$. Then the canonical inclusion $ i\,: \, \s^{n-1}(R)\, \hookrightarrow \, \s^n$ is a proper $r$-harmonic submanifold of $\s^n$ if and only if the radius $R$ is equal to $1 \slash \sqrt{r}$.
\end{theorem}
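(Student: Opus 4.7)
I would write $R=\sin\alpha$ with $\alpha\in(0,\pi/2]$ and realize $i$ explicitly via the parametrization $\Psi(x)=(\sin\alpha\cdot x,\cos\alpha)$, $x\in\mathbb{S}^{n-1}\subset\mathbb{R}^n$, so that $\mathbb{S}^{n-1}(R)$ appears as the parallel hypersurface of $\mathbb{S}^n$ at geodesic distance $\alpha$ from a pole. The inclusion is totally umbilical, a globally defined unit normal along $i$ is $\eta=(-\cos\alpha\cdot x,\sin\alpha)$, and the Weingarten formula yields $\bar\nabla_X\eta=-\cot\alpha\cdot X$ for every tangent $X$, where $\bar\nabla$ denotes the pull-back of the Levi-Civita connection of $\mathbb{S}^n$. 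In particular $\nabla^\perp\eta=0$ and $\tau(i)=(n-1)\cot\alpha\cdot\eta$, so $i$ is harmonic exactly when $R=1$.

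The plan is then to substitute $i$ into the formula for $\tau_r(\varphi)$ recalled in Section~\ref{section-Maeta} and reduce the $r$-harmonicity equation $\tau_r(i)=0$ to an algebraic condition on $R$. Before doing the computation, I would observe the key equivariance: $i$ is invariant under the natural $SO(n)$-action on $\mathbb{R}^{n+1}$ fixing the $(n+1)$-st coordinate axis, and the unique line of $SO(n)$-invariant sections of $i^{-1}(T\mathbb{S}^n)$ is spanned by $\eta$, so by naturality $\tau_r(i)$ must be a constant multiple of $\eta$. Writing $\tau_r(i)=\lambda_r\cdot\eta$, the problem reduces to computing the scalar $\lambda_r$.

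To compute $\lambda_r$, I would use the two diagonal identities
$$\bar\Delta\eta=-(n-1)\cot^2\alpha\cdot\eta,\qquad \sum_{j=1}^{n-1}R^{\mathbb{S}^n}(\eta,di(e_j))\,di(e_j)=-(n-1)\,\eta,$$
the first from the Gauss-Weingarten equations, the second from the constant sectional curvature of $\mathbb{S}^n$. Combined with the fact that $\bar\nabla_X\eta=-\cot\alpha\cdot X$ is tangent and rescales trivially, they ensure that every term of the recursion for $\tau_r(i)$ reduces to a scalar multiple of $\eta$, and an induction on $r$ should produce a formula of the shape $\lambda_r=c_r\cot\alpha\cdot P_r(\cot^2\alpha)$ for a nonzero constant $c_r$ and a polynomial $P_r$ whose unique positive root is $t=r-1$. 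Under the standing non-harmonicity assumption $\cot\alpha\neq 0$, the equation $\lambda_r=0$ then forces $\cot^2\alpha=r-1$, equivalent to $R^2=1/r$.

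The main obstacle is the precise inductive computation of $P_r$ and the verification that $r-1$ is its unique positive root. Maeta's formula for $\tau_r$ combines several iterated rough-Laplacian and curvature-correction terms, and one must check that in the present totally umbilical, rotationally symmetric setting these terms conspire to shift the biharmonic root $\cot^2\alpha=1$ all the way up to $\cot^2\alpha=r-1$, rather than, for instance, producing the former as a multiple root. The simplifying features that make this tractable are the $1$-dimensionality of the normal bundle, which reduces the computation to scalars, and the constancy of the principal curvatures, which makes $\bar\nabla\eta$ a pointwise multiple of the identity on tangent vectors.
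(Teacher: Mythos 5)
Your setup is sound and your predicted answer is correct, but the proof has a genuine gap at exactly the point you flag as ``the main obstacle'': the inductive computation of the scalar $\lambda_r$ in $\tau_r(i)=\lambda_r\,\eta$ and the verification that its vanishing is equivalent to $\cot^2\alpha=r-1$ are never carried out. Everything up to that point (the equivariance argument forcing $\tau_r(i)$ to be proportional to $\eta$, the diagonal identities for $\bar\Delta\eta$, for the curvature term, and for $\bar\nabla\eta$) only shows that the $r$-harmonicity equation reduces to \emph{some} algebraic condition on $\cot^2\alpha$; it does not identify that condition, and you yourself raise the worry that the terms of Maeta's recursion might not ``conspire'' to place the root at $r-1$. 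As written, the argument therefore establishes the reduction to a one-dimensional problem but not the theorem. (Two smaller points: the claim that the $SO(n)$-invariant sections of $i^{-1}(T\mathbb{S}^n)$ are spanned by $\eta$ fails for $n=2$, where the unit tangent field of $\mathbb{S}^1$ is also invariant, so that case needs an extra reflection symmetry or a direct check; and your signs for $\bar\Delta\eta$ and for the curvature sum are convention-dependent and would have to be pinned down consistently before any honest induction.)

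It is worth comparing with how the paper closes precisely this gap without ever computing the coefficients in Maeta's formula. The authors work with the (non-isometric) map $\varphi_{\alpha}(w)=(\sin\alpha\,w,\cos\alpha)$, compute the restricted $r$-energy by the easy recursion $E_{r+1}(\varphi_\alpha)=(n-1)\cos^2\alpha\,E_r(\varphi_\alpha)$, obtaining $E_r(\varphi_\alpha)=c\,\varepsilon_r(\alpha)$ with $\varepsilon_r(\alpha)=\sin^2\alpha\cos^{2(r-1)}\alpha$, and prove (as you do) that $\tau_r(\varphi_\alpha)=\overline{T_r}(\alpha)\,\partial/\partial\alpha$. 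They then evaluate the first variation along the explicit family $\varphi_{\alpha+t}$, whose variation field is $\partial/\partial\alpha$, which identifies the unknown scalar as $\overline{T_r}(\alpha)=-\tfrac{1}{2}(n-1)^r\varepsilon_r'(\alpha)$. The equation $\varepsilon_r'(\alpha)=0$ with $\sin\alpha\cos\alpha\neq0$ gives $\cos^2\alpha=(r-1)\sin^2\alpha$, i.e.\ $R^2=\sin^2\alpha=1/r$, immediately. If you want to salvage your direct approach, you must either perform the induction on $\lambda_r$ explicitly or adopt this energy-plus-first-variation trick, which is exactly the device that replaces the hard bookkeeping you left open.
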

\begin{theorem}\label{Corollary-parallel-spheres-Clifford} Let $r \geq 2,\,p,q \geq 1 $ and assume that the radii $R_1,R_2$ verify $R_1^2+R_2^2=1$. Then 
the generalized Clifford's torus inclusion $ i\,: \, \s^{p}(R_1) \times \s^{q}(R_2)\, \hookrightarrow \, \s^{p+q+1}$ is: 

(a) minimal if and only if

\begin{equation}\label{condizione-minimalita}
  R_1^2= \frac{p}{p+q} \,\,\quad {\rm and } \,\, \quad R_2^2 =\frac{q}{p+q}\,\, ; 
 \end{equation} 
 
 (b) a proper $r$-harmonic submanifold of $\s^{p+q+1}$ if and only if \eqref{condizione-minimalita} does not hold and either
\begin{equation}\label{caso:r=2}
 r=2  \,\, , \quad p \neq q \quad {\rm and } \,\, \quad R_1^2=R^2_2= \frac{1}{2}
 \end{equation} 
or $r \geq 3$ and $t=R_1^2$ is a root of the following polynomial: 
\begin{equation}\label{r-harmonicity-Clifford}
P(t)=r(p+q)\,t^3+[q-p-r(q+2p)]\,t^2+(2p+rp)\,t \,-\,p \,\,.
\end{equation}
\end{theorem}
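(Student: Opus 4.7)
First I would parametrize the family by $R_1 = \sin\alpha$, $R_2 = \cos\alpha$ (equivalently $t = R_1^2 \in (0,1)$) and realise $\s^{p+q+1}\subset\R^{p+1}\times\R^{q+1}$. At a point $(x,y)$ of the torus, the unit normal in $\s^{p+q+1}$ is $\eta=(R_2 x/R_1,\,-R_1 y/R_2)$, and the Weingarten formula shows that $A_\eta$ has eigenvalue $-\cot\alpha$ on the $\s^p(R_1)$-factor (with multiplicity $p$) and $\tan\alpha$ on the $\s^q(R_2)$-factor (with multiplicity $q$). Consequently
$$\tau(i) \;=\; \beta\,\eta, \qquad \beta \;=\; -p\cot\alpha + q\tan\alpha \;=\; \frac{(p+q)t - p}{\sqrt{t(1-t)}},$$
and part (a) follows at once since $\beta = 0 \iff t = p/(p+q)$.

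Next I would exploit the fact that the torus is a parallel submanifold: $\eta$ is parallel in the one-dimensional normal bundle and the principal curvatures are constants. In an orthonormal frame diagonalising $A_\eta$ one then verifies by a direct Weingarten computation
$$\Delta\,\tau(i) \;=\; |A_\eta|^2\,\tau(i), \qquad |A_\eta|^2 \;=\; p\cot^2\alpha + q\tan^2\alpha,$$
while the constant sectional curvature $1$ of $\s^{p+q+1}$ and the normality of $\tau(i)$ give
$$\trace R^{\s^{p+q+1}}(di,\tau(i))\,di \;=\; -(p+q)\,\tau(i).$$
Moreover $\nabla\tau(i) = -\beta\,A_\eta$ as a tangent $(1,1)$-tensor along $i$, which encodes all odd-order tangential derivatives. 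Together these identities turn every step of the iterative formula for $\tau_r$ recalled in Section~\ref{section-Maeta} into algebra, and in particular force $\tau_r(i)$ to be parallel to $\eta$, so that the $r$-harmonicity condition collapses to a single scalar equation in $t$.

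For $r = 2$ the bitension field becomes $\tau_2(i) = \beta\bigl[(p+q) - |A_\eta|^2\bigr]\,\eta$. Multiplying the bracket through by $t(1-t)$ yields
$$(p+q)\,t(1-t) - p(1-t)^2 - q\,t^2 \;=\; -(2t-1)\bigl[(p+q)t - p\bigr],$$
so non-minimal biharmonicity forces $t = 1/2$, which is a proper solution iff it does not coincide with the minimal radius $p/(p+q)$, i.e.\ iff $p \neq q$: this is precisely \eqref{caso:r=2}. For $r \geq 3$, I would substitute the identities above into the iterative expression for $\tau_r$, collect terms, multiply by the relevant power of $t(1-t)$ to clear denominators, and factor out the $\beta$ which accounts for the minimal case; I expect the remaining equation to be a cubic in $t$ whose coefficients can be read off against those of $P(t)$ in \eqref{r-harmonicity-Clifford}.

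The main obstacle is the bookkeeping of this last step: for $r \geq 3$ the $r$-tension field mixes normal contributions of the form $\Delta^l \tau = |A_\eta|^{2l}\tau$ with tangential ones $\nabla\Delta^l \tau = -|A_\eta|^{2l}\beta A_\eta$, and one must carefully verify that after all cancellations the result is genuinely a cubic and not a higher-degree polynomial. The structural reason this works is that the Clifford torus has only two distinct principal curvatures, so every symmetric function of them is a polynomial in $\cot^2\alpha$ and $\tan^2\alpha$; combined with the collapsing identity $\Delta\tau = |A_\eta|^2\tau$, this keeps the degree in $t$ bounded as $r$ varies, with $r$ entering only the coefficients. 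A final arithmetic check that $P\bigl(p/(p+q)\bigr) = pq(p-q)/(p+q)^2$, which is nonzero whenever $p \neq q$, confirms that the minimal radius is not a spurious root of $P$ and completes the proof of (b).
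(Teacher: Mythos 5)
Your proposal is correct, but it takes a genuinely different route from the paper. The paper never touches the shape operator: it embeds the inclusion in the one\-/parameter family $\varphi_{\alpha}(R_1w,R_2z)=(\sin\alpha\,w,\cos\alpha\,z)$, computes the reduced $r$-energy $\varepsilon^C_r(\alpha)$ of \eqref{condizione-r-harmonicity-Clifford} by iterating $d$ and $d^*$ in adapted coordinates (as in Lemmata \ref{lemma-tau}--\ref{lemma-r-energy}), shows that $r$-harmonicity is equivalent to $(\varepsilon^C_r)'(\alpha^*)=0$ by verifying that $\tau_r(\varphi_\alpha)$ stays proportional to $\partial/\partial\alpha$ (or by Palais's symmetric criticality), and only then specializes $R_1^2=\sin^2\alpha^*=t$ to obtain $P(t)=0$. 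You instead compute $\tau_r(i)$ directly from the extrinsic geometry. Your identities are all correct, and the bookkeeping you flag as the main obstacle closes more easily than you fear: writing $\lambda=|A_\eta|^2$ and $\beta=\trace A_\eta$, one has $\sum_j R(A_\eta e_j,\eta)e_j=-\beta\,\eta$, the exponents $a+b$ appearing in the $\ell$-sums of \eqref{2s-tension} and \eqref{2s+1-tension} are constantly equal to $r-3$, and each summand contributes the same multiple of $\lambda^{r-3}\beta^2\,\tau(i)$, so that in both parities $\tau_r(i)=\lambda^{r-3}\bigl[\lambda^2-(p+q)\lambda-(r-2)\beta^2\bigr]\tau(i)$ --- the degree never grows with $r$. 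Clearing denominators with $\lambda=\bigl(p(1-t)^2+qt^2\bigr)/\bigl(t(1-t)\bigr)$ and $\beta^2=\bigl((p+q)t-p\bigr)^2/\bigl(t(1-t)\bigr)$, and using your identity $(p+q)t(1-t)-\bigl[p(1-t)^2+qt^2\bigr]=-(2t-1)\bigl[(p+q)t-p\bigr]$, the resulting quartic factors as $\bigl[(p+q)t-p\bigr]\cdot\bigl\{\bigl[p(1-t)^2+qt^2\bigr](2t-1)-(r-2)t(1-t)\bigl[(p+q)t-p\bigr]\bigr\}$, and the second factor expands exactly to $P(t)$ in \eqref{r-harmonicity-Clifford}; the case $r=2$ reduces to \eqref{caso:r=2} and your evaluation $P\bigl(p/(p+q)\bigr)=pq(p-q)/(p+q)^2$ is correct. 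What your approach buys is a self-contained extrinsic proof with an explicit closed form for $\tau_r(i)$; what the paper's buys is a single equivariant scheme that treats the hypersphere and the Clifford torus uniformly, identifies the critical radii as critical points of an explicit one-variable energy, and yields the instability observation of Remark \ref{remark-stability} essentially for free.
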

\begin{remark}\label{remark-clifford2} Let $p = q$. Then $P(t)$ in \eqref{r-harmonicity-Clifford} takes the following form:
\begin{equation}\label{p=q}
P(t)=p \,(2t-1) \,(rt^2-rt+1) \,\, .
\end{equation}
Now, if $2 \leq r \leq 4$, then the only root is $t= (1\slash 2)$, which is associated to a minimal submanifold. But it is important to point out that, if $r \geq 5$, then $P(t)$ has two admissible solutions
$$
t= \frac{1}{2} \pm \frac{1}{2 } \,\sqrt {\frac{r-4}{r}}
$$
which give rise to proper $r$-harmonic generalized Clifford's tori in $\s^{2p+1}$.
\end{remark}
\begin{remark}\label{remark-clifford1} Let $p \neq q$. Then, for all $r \geq 3  , \,p,q \geq 1$, $P(t)$ has at least one admissible root $t^*$, i.e., such that $0<t^* <1$: this is a simple consequence of the fact that $P(0)=-p$ and $P(1)=q$. Moreover, it is easy to check that the associated submanifold cannot be minimal and so is a proper $r$-harmonic submanifold. By studying the sign of the discriminant of the third order polynomial $P(t)$, it can be shown that there are three distinct admissible solutions if 
\begin{equation}\label{condition-uniqueness}
(r^4-4r^3+24r^2-40 r-8)\,pq +4 (1-r)^3 \,(p^2+q^2) > 0 \,\,.
\end{equation}
In particular, if $p$ and $q$ are fixed, \eqref{condition-uniqueness} holds provided that $r$  is sufficiently large and so, in this case, we have three distinct proper $r$-harmonic generalized Clifford's tori in $\s^{p+q+1}$.
\end{remark}

\begin{remark}\label{remark-risultati-noti-paralleli} The results of Theorems~\ref{Corollary-parallel-spheres} and \ref{Corollary-parallel-spheres-Clifford} were known when $r=2$ (see \cite{Jiang} and \cite{CMO02}) and $r=3$ (see \cite{Maeta2})(to recover Maeta's result, set $\lambda^2= \cot^2 (\alpha^*)$ in Theorem $4.6$ of \cite{Maeta2}).
\end{remark}
\section{Generalities on $r$-harmonic maps}\label{section-Maeta}

Here we recall some basic facts whose proofs can be found in \cite{EL83} and \cite{Maeta1}. Let $\varphi:(M,g_M)\to(N,g_N)$ be a smooth map between two Riemannian manifolds $M$ and $N$ of dimension $m$ and $n$ respectively. Then $d \varphi$ is a $1$-form with values in the vector bundle $\varphi ^{-1}TN$ or, equivalently, a section of $T^*M\otimes \varphi ^{-1}TN$. In local charts $d \varphi$ is described by:
\begin{equation}\label{d-phi-inlocalcharts}
d \varphi= \frac{\partial \varphi^{\gamma}}{\partial x_i} \, dx^i \otimes \frac{\partial }{\partial y_\gamma}= \varphi^{\gamma}_i \,\, dx^i \otimes \frac{\partial }{\partial y_\gamma}  \,\, ,
\end{equation}
where, here and below, the Einstein's sum convention over repeated indices is adopted.
%
%
The second fundamental form $\nabla d \varphi$ is a covariant differentiation of the $1$-form $d \varphi$, i.e., a section of $\odot^2 T^*M\otimes \varphi ^{-1}TN$. The local coordinates expression for the second fundamental form is
\begin{equation}\label{seconda-forma-coordinate-locali}
\nabla d \varphi = \left ( \nabla d \varphi \right )^\gamma_{ij} \, dx^i dx^j \otimes\frac{\partial}{\partial y_\gamma} \,\, ,
\end{equation}
where
\begin{eqnarray}\label{seconda-forma-coordinate-locali-bis}
\left ( \nabla d \varphi \right )^\gamma_{ij} &=& \left [ \nabla_{\partial \slash \partial x_i} \left ( \varphi^{\beta}_\ell \,\, dx^\ell \otimes \frac{\partial }{\partial y_\beta} \right ) \frac{\partial}{\partial x_j} \right ]^\gamma\\ \nonumber
&=& \varphi^\gamma _{ij} - {}^M \Gamma^k_{ij} \varphi^\gamma _{k} + {}^N \Gamma^\gamma_{\beta \delta} \varphi^\beta _{i} \varphi^\delta _{j} \,\, . 
\end{eqnarray}
Now, since $\tau (\varphi)= - d^* d \varphi$ is the trace of the second fundamental form, its    description in local coordinates is
\begin{equation}\label{tau-coordinate-locali}
( \tau (\varphi) )^\gamma  = (- d^* d \varphi)^\gamma=g_M^{ij} \, \left ( \nabla d \varphi \right )^\gamma_{ij}\,\, .
\end{equation}

For our purposes, it will also be important to recall the definitions of the rough laplacian and of the sectional curvature operator. We shall denote by $\nabla^M, \nabla^N$ and $\nabla^{\varphi}$ the induced connections on the bundles $TM, TN$ and $\varphi ^{-1}TN$ respectively. The \textit{rough Laplacian} on sections of $\varphi^{-1} \, (TN)$, denoted by $\overline{\Delta}$, is defined by
\begin{equation}\label{roughlaplacian}
    \overline{\Delta}=d^* d =-\sum_{i=1}^m\{\nabla^{\varphi}_{e_i}
    \nabla^{\varphi}_{e_i}-\nabla^{\varphi}_
    {\nabla^M_{e_i}e_i}\}\,\,,
\end{equation}
where $\{e_i\}_{i=1}^m$ is a local orthonormal frame on $M$. The sectional curvature operator on $N$ is the $(1,3)$-tensor defined by:
\begin{equation}\label{curvatura}
    R^N (X,Y)W= \nabla^N_X \nabla^N_Y \, W- \nabla^N_Y \nabla^N_X \, W-\nabla^N_{[X,Y]}\,W \,\, .
\end{equation}

We now proceed to a general description of the $r$-energy \eqref{bienergia} when $r \geq 2$. If $r=2s$, $s \geq 1$:
\begin{eqnarray}\label{2s-energia}
E_{2s}(\varphi)&=& \frac{1}{2} \int_M \, \langle \, \underbrace{(d^* d) \ldots (d^* d)}\varphi, \,\underbrace{(d^* d) \ldots (d^* d)}\varphi \, \rangle_{_N}\, \,dv_M \\ \nonumber
&& \quad \qquad \qquad s\,\,{\rm \,times} \quad \qquad \,\,\quad s\,\,\,{\rm times}\\ \nonumber
&& \\ \nonumber
&=& \frac{1}{2} \int_M \, \langle \,\overline{\Delta}^{s-1}\tau(\varphi), \,\overline{\Delta}^{s-1}\tau(\varphi)\,\rangle_{_N} \, \,dv_M 
\end{eqnarray}
Now, the map $\varphi$ is $2s$-harmonic if, for all variations $\varphi_t$,
$$
\left .\frac{d}{dt} \, E_{2s}(\varphi_t) \, \right |_{t=0}\,=\,0 \,\,.
$$
Setting
$$
V= \left .\frac{\partial  \varphi_t}{ \partial t}  \,\right |_{t=0} \quad \in \,\, \Gamma (\varphi^{-1}TN) \,\,,
$$
\begin{equation}\label{r-energia-tension}
\left .\frac{d}{dt} \, E_{2s}(\varphi _t) \,\right |_{t=0}\,= \,-\,\int_M \, \langle\,\tau_{2s}(\varphi),V\,\rangle_{_N}\, \,dv_M \,\,,
\end{equation}
where the explicit formula for the $2s$-tension field $\tau_{2s}(\varphi)$ is:
\begin{eqnarray}\label{2s-tension}
\tau_{2s}(\varphi)&=&\overline{\Delta}^{2s-1}\tau(\varphi)-R^N \left(\overline{\Delta}^{2s-2} \tau(\varphi), d \varphi (e_j)\right ) d \varphi (e_j) \nonumber\\ 
&&  -\, \sum_{\ell=1}^{s-1}\, \left \{R^N \left( \nabla^\varphi_{e_j}\,\overline{\Delta}^{s+\ell-2} \tau(\varphi), \overline{\Delta}^{s-\ell-1} \tau(\varphi)\right ) d \varphi (e_j)  \right .\\ \nonumber
&& \qquad \qquad  -\, \left . R^N \left( \overline{\Delta}^{s+\ell-2} \tau(\varphi),\nabla^\varphi_{e_j}\, \overline{\Delta}^{s-\ell-1} \tau(\varphi)\right ) d \varphi (e_j)  \right \} \,\, ,
\end{eqnarray}
where $\overline{\Delta}^{-1}=0$ and $\{e_j\}_{j=1}^m$ is a local orthonormal frame on $M$ (here and below, the sum over $j$ is not written but understood). Of course, $\varphi$ is $2s$-harmonic if $\tau_{2s}(\varphi)$ vanishes identically. In the case that $r=2s+1$, the relevant modifications are:
\begin{eqnarray}\label{2s+1-energia}
E_{2s+1}(\varphi)&=& \frac{1}{2} \int_M \, \langle\,d\underbrace{(d^* d) \ldots (d^* d)}\varphi, \,d\underbrace{(d^* d) \ldots (d^* d)}\varphi\,\rangle_{_N}\, \,dv_M \nonumber\\ 
&& \,\quad \qquad \qquad s\,\,{\rm \,times} \quad \qquad \,\,\quad \, s\,\,\,{\rm times}\\ \nonumber
&& \\ \nonumber
&=& \frac{1}{2} \int_M \, \langle\,\nabla^\varphi_{e_j}\, \overline{\Delta}^{s-1}\tau(\varphi), \,\nabla^\varphi_{e_j}\,\overline{\Delta}^{s-1}\tau(\varphi)\, \rangle_{_N} \, \,dv_M \,\,\, ;
\end{eqnarray}
\begin{eqnarray}\label{2s+1-tension}
\tau_{2s+1}(\varphi)&=&\overline{\Delta}^{2s}\tau(\varphi)-R^N \left(\overline{\Delta}^{2s-1} \tau(\varphi), d \varphi (e_j)\right ) d \varphi (e_j)\nonumber \\ 
&&  -\, \sum_{\ell=1}^{s-1}\, \left \{R^N \left( \nabla^\varphi_{e_j}\,\overline{\Delta}^{s+\ell-1} \tau(\varphi), \overline{\Delta}^{s-\ell-1} \tau(\varphi)\right ) d \varphi (e_j)  \right .\\ \nonumber
&& \qquad \qquad  -\, \left . R^N \left( \overline{\Delta}^{s+\ell-1} \tau(\varphi),\nabla^\varphi_{e_j}\, \overline{\Delta}^{s-\ell-1} \tau(\varphi)\right ) d \varphi (e_j)  \right \} \\ \nonumber
&& \,-\,R^N \left( \nabla^\varphi_{e_j}\,\overline{\Delta}^{s-1} \tau(\varphi), \overline{\Delta}^{s-1} \tau(\varphi)\right ) d \varphi (e_j)\,\,. 
\end{eqnarray}
We point out that $\tau_r(\varphi)=0$ is a semi-linear, elliptic PDE's system of order $2r$.

\section{Proofs of Theorems~\ref{Corollary-parallel-spheres} and \ref{Corollary-parallel-spheres-Clifford}}\label{proofs}
In order to prove Theorem~\ref{Corollary-parallel-spheres} it is sufficient to determine the condition of $r$-harmonicity for a map defined as follows:
\begin{equation}\label{rotationallysymmetricmaps}\begin{array}{lcll}
                                           \varphi_{\alpha^*} \,:\, &\s^{n-1} &\to &  \s^n \subset \R^n \times \R \\
                                           &&& \\
                                           &w &\mapsto & ( \sin \alpha^*\,w, \,\cos \alpha^*) \,\, ,
                                         \end{array}
\end{equation}
where $\alpha^*$ is a fixed constant value in the interval $(0,\pi)$. Indeed, if $\varphi_{\alpha^*}$ is a map as in \eqref{rotationallysymmetricmaps}, then the induced metric on $\s^{n-1}$ is given by $[\varphi_{\alpha^*}]^* (g_{\s^n})= (\sin^2 \alpha^*)\,g_{\s^{n-1}} $. Therefore, since $r$-harmonicity is preserved by multiplication of the riemannian metric of the domain manifold by a positive constant, we conclude that if $\varphi_{\alpha^*}$ is a proper $r$-harmonic map, then its image $\varphi_{\alpha^*}(\s^{n-1})= \s^{n-1}(\sin \alpha^*)$ is a proper $r$-harmonic hypersphere of radius $R=\sin \alpha^*$. By way of summary, we only have to prove that the map $\varphi_{\alpha^*}$ in \eqref{rotationallysymmetricmaps} is a proper $r$-harmonic map if and only if $\sin \alpha^* = 1 \slash \sqrt r$. This is an immediate consequence of the following:

\begin{proposition}\label{main-theor-parallels} Let $\varphi_{\alpha^*}\,: \,\,\s^{n-1} \to \,\s^n$ be a map of the type \eqref{rotationallysymmetricmaps}. Then $\varphi_{\alpha^*}$ is a proper $r$-harmonic map ($r \geq 2$) if and only if $\cos \alpha^* \neq 0$ and $\alpha^*$ is a critical point of the function $\varepsilon_r \, : (0,\pi) \to \R $ defined by
\begin{equation}\label{condizione-r-harmonicity}
 \varepsilon_r (\alpha)= \sin^2\alpha \, \cos^{2(r-1)}\alpha \,\, .
\end{equation}
\end{proposition}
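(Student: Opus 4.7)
The strategy is to use the rotational symmetry of $\varphi_{\alpha^*}$ to reduce the $r$-harmonicity PDE to a one-variable critical-point problem. Set
\[
\nu(w) \,:=\, (\cos\alpha^*\,w,\,-\sin\alpha^*),
\]
which is a unit section of $\varphi_{\alpha^*}^{-1}T\s^n$ orthogonal to $d\varphi_{\alpha^*}(T\s^{n-1})$ at every point; note that $\nu = \partial_\alpha\varphi_\alpha|_{\alpha^*}$ is the variation vector of the family $\{\varphi_\alpha\}$. Working in extrinsic coordinates in $\R^{n+1}$ with the formulas from Section~\ref{section-Maeta}, a direct computation produces the two identities
\[
\tau(\varphi_{\alpha^*}) \,=\, -(n-1)\sin\alpha^*\cos\alpha^*\,\nu,\qquad \nabla^\varphi_X\nu \,=\, \cot\alpha^*\,d\varphi_{\alpha^*}(X)\quad\text{for all }X\in T\s^{n-1}.
\]

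The second identity triggers a clean iteration: substituting into \eqref{roughlaplacian} gives $\overline{\Delta}\nu = -\cot\alpha^*\,\tau(\varphi_{\alpha^*}) = (n-1)\cos^2\alpha^*\,\nu$, so $\nu$ is an eigenvector of $\overline{\Delta}$. By induction,
\[
\overline{\Delta}^{\ell}\tau(\varphi_{\alpha^*}) = -(n-1)^{\ell+1}\sin\alpha^*\cos^{2\ell+1}\alpha^*\,\nu,\quad \nabla^\varphi_X\overline{\Delta}^{\ell}\tau(\varphi_{\alpha^*}) = -(n-1)^{\ell+1}\cos^{2\ell+2}\alpha^*\,d\varphi_{\alpha^*}(X).
\]
Plugging these into \eqref{2s-energia} or \eqref{2s+1-energia} and using $|\nu|=1$ and $|d\varphi_{\alpha^*}(e_j)|^2=\sin^2\alpha^*$, the integrand is spatially constant on $\s^{n-1}$ and equals $(n-1)^r\,\varepsilon_r(\alpha^*)$; hence in both parities
\[
E_r(\varphi_\alpha) \,=\, \tfrac{(n-1)^r}{2}\,\mathrm{Vol}(\s^{n-1})\,\varepsilon_r(\alpha).
\]

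For the reduction, I would combine the iteration formulas above with the curvature identity $R^{\s^n}(X,Y)Z = \langle Y,Z\rangle X - \langle X,Z\rangle Y$ to show that every summand of \eqref{2s-tension} and \eqref{2s+1-tension} is a scalar multiple of $\nu$ (the iterated Laplacians of $\tau$ are multiples of $\nu$, their $\nabla^\varphi$-derivatives are multiples of $d\varphi_{\alpha^*}$, and $R^{\s^n}$ pairs the two back into $\R\nu$). Therefore $\tau_r(\varphi_{\alpha^*}) = K(\alpha^*)\,\nu$ for a single scalar $K(\alpha^*)$. Applying the first-variation formula \eqref{r-energia-tension} to the one-parameter variation $\alpha\mapsto\varphi_\alpha$, whose variation vector at $\alpha^*$ is precisely $\nu$, gives
\[
\tfrac{d}{d\alpha}E_r(\varphi_\alpha)\big|_{\alpha^*} \,=\, -\int_{\s^{n-1}}K(\alpha^*)\langle\nu,\nu\rangle\,dv \,=\, -K(\alpha^*)\,\mathrm{Vol}(\s^{n-1}),
\]
and comparison with the energy formula forces $K(\alpha^*) = -\tfrac{(n-1)^r}{2}\,\varepsilon_r'(\alpha^*)$. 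Thus $\tau_r(\varphi_{\alpha^*})=0$ if and only if $\varepsilon_r'(\alpha^*)=0$. Finally, $\tau(\varphi_{\alpha^*})=0$ if and only if $\sin\alpha^*\cos\alpha^*=0$, i.e., $\cos\alpha^*=0$ (since $\alpha^*\in(0,\pi)$), so properness is equivalent to $\cos\alpha^*\neq 0$, and the statement follows.

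The main obstacle is the bookkeeping of the iterated rough Laplacian and curvature terms in \eqref{2s-tension}--\eqref{2s+1-tension}; the entire calculation hinges on the observation that $\nu$ is a $\overline{\Delta}$-eigenvector and $\nabla^\varphi\nu$ is a scalar multiple of $d\varphi_{\alpha^*}$, which funnels every contribution into the one-dimensional subspace $\R\nu$ and renders the first-variation reduction immediate.
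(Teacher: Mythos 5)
Your proposal is correct and follows essentially the same route as the paper: your $\nu$ is exactly the coordinate field $\partial/\partial\alpha$ of the warped-product coordinates used in Lemmata~\ref{lemma-christoffels}--\ref{lemma-r-energy}, your iteration reproduces the pattern \eqref{pattern-iteration}, and your identification of the scalar $K(\alpha^*)$ via the first-variation formula along $\alpha\mapsto\varphi_\alpha$ is precisely the argument of Remark~\ref{remark-tau-esplicito} yielding \eqref{tau-ben-messo-esplicito}. No substantive difference.
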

\begin{remark}\label{remark-dopo-main-theorem} We shall see that, up to the constant 
$$c=\frac{1}{2}\,(n-1)^r \, {\rm Vol}(\s^{n-1}) \,\, , 
$$
$\varepsilon_r (\alpha^*)$ coincides with the $r$-energy of the map $\varphi_{\alpha^*}$ in \eqref{rotationallysymmetricmaps} ($r \geq 1$). In particular, if $\cos \alpha^* = 0$, then $\varphi_{\alpha^*}$ is harmonic.
\end{remark}
\begin{remark}\label{remark-stability} If $\alpha^*$ is a critical point of the function $\varepsilon_r (\alpha)$ defined in \eqref{condizione-r-harmonicity}, then it is easy to check that $\varepsilon_r''\, (\alpha^*)<0$: as a simple consequence of this fact, we deduce that all the $r$-harmonic hyperspheres obtained in Theorem \ref{Corollary-parallel-spheres} are \textit{unstable} critical points.
\end{remark}
The proof of Proposition~\ref{main-theor-parallels} is based on a series of lemmata in which we compute the relevant covariant derivatives. To this purpose, it is convenient to carry out a specific preliminary work. Let $w_1,\, \ldots ,\,w_{n-1}$ be a set of local coordinates  on the domain $\s^{n-1}$. On $\s^n$ we choose coordinates $w_j, \alpha$ so that the metric tensor is described by
\begin{equation}\label{metric-s^n}
g_{\s^n}=\sin^2 \alpha \,\, g_{\s^{n-1}} + d \alpha^2\,\,.
\end{equation}
We observe that $d \varphi_{\alpha^*}\left ( \partial \slash \partial w_j \right )=\partial \slash \partial w_j$ for all $1 \leq j \leq n-1$ and so, if it is clear from the context, we will not state explicitly whether a vector field $\partial \slash \partial w_i$ is to be considered on the domain or on the codomain. The computation of several covariant derivatives below is based on
\begin{lemma}\label{lemma-christoffels} Let $w_1, \ldots,w_{n-1},\alpha$ be local coordinates on $\s^n$ as above. Then their associated Christoffel's symbols $\Gamma^k_{ij}$ are described by the following table:
\begin{equation}\label{simb-christ}
\begin{array}{lll}
{\rm (i)}&{\rm If}\, 1 \leq i,j,k \leq n-1: & \Gamma^k_{ij}={}^{\s}\Gamma^k_{ij} \\
{\rm (ii)}&{\rm If}\, 1 \leq i, j \leq n-1: & \Gamma^n_{ij}=\, -\,\sin \alpha \, \cos \alpha \,\,\,\,(g_{\s})_{ij} \\
{\rm (iii)}&{\rm If}\, 1 \leq i,j \leq n-1: & \Gamma^j_{i n}=\frac{\cos \alpha}{\sin \alpha} \,\,\delta_i^j \\
{\rm (iv)}&{\rm If}\, 1 \leq j \leq n: & \Gamma^j_{nn}=0=\Gamma^n_{jn} \,\,,\\
\end{array}
 \end{equation}
where ${}^{\s}\Gamma^k_{ij}$ and $g_{\s}$ denote the Christoffel symbols and the metric tensor of $\s^{n-1}$ respectively.
\end{lemma}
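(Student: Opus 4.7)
The plan is a direct computation from the Koszul formula
\begin{equation*}
\Gamma^k_{ij}=\tfrac{1}{2}\,g^{kl}\bigl(\partial_i g_{jl}+\partial_j g_{il}-\partial_l g_{ij}\bigr),
\end{equation*}
exploiting the warped-product form of the metric \eqref{metric-s^n}. First I would record the components of $g_{\s^n}$ in the chosen chart: for $1\le i,j\le n-1$ one has $g_{ij}=\sin^2\alpha\,(g_{\s})_{ij}$, while $g_{in}=0$ and $g_{nn}=1$. Since the metric is block diagonal, its inverse has the analogous form $g^{ij}=\sin^{-2}\alpha\,(g_{\s})^{ij}$ for $i,j\le n-1$, together with $g^{in}=0$ and $g^{nn}=1$.

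Next I would plug these into the Koszul formula four times, once for each case. In case (i), with $1\le i,j,k\le n-1$, only the block-diagonal terms contribute and the factor $\sin^2\alpha$ cancels against the factor $\sin^{-2}\alpha$ coming from $g^{kl}$, leaving exactly the Koszul expression for $g_{\s^{n-1}}$; this gives $\Gamma^k_{ij}={}^{\s}\Gamma^k_{ij}$. In case (ii), only $l=n$ survives in the sum over $l$, and the only nonzero derivative is $\partial_\alpha g_{ij}=2\sin\alpha\cos\alpha\,(g_{\s})_{ij}$ appearing with a minus sign, which yields $\Gamma^n_{ij}=-\sin\alpha\cos\alpha\,(g_{\s})_{ij}$.

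For case (iii), the first and third terms in the Koszul bracket vanish (they involve $g_{nl}$ or $g_{in}$), so only $\partial_n g_{il}=2\sin\alpha\cos\alpha\,(g_{\s})_{il}$ contributes; contracting with $g^{jl}=\sin^{-2}\alpha\,(g_{\s})^{jl}$ and using $(g_{\s})^{jl}(g_{\s})_{il}=\delta^j_i$ produces $\Gamma^j_{in}=\cot\alpha\,\delta^j_i$. Finally, for case (iv), every derivative occurring in the formula vanishes identically because $g_{nn}=1$ is constant and $g_{nl}\equiv 0$ for $l<n$; hence $\Gamma^j_{nn}=\Gamma^n_{jn}=0$.

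There is no serious obstacle here: the lemma is entirely computational, and the only point requiring attention is bookkeeping which terms in the Koszul sum actually survive given the block-diagonal structure and the dependence of $g_{ij}$ on $\alpha$ only through the factor $\sin^2\alpha$.
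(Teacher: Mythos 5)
Your proof is correct and follows exactly the route the paper takes: the paper's own proof simply invokes the same formula \eqref{formula-simboli} and calls the rest a straightforward computation, which you have carried out in full detail using the block-diagonal structure of \eqref{metric-s^n}. All four cases check out.
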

\begin{proof}The proof is a straightforward computation based on the well-known formula
\begin{equation}\label{formula-simboli}
\Gamma^k_{ij}= \frac{1}{2}\, g^{k \ell} \left(\frac{\partial g_{j \ell}}{\partial y_i}+ \frac{\partial g_{ \ell i}}{\partial y_j}-\frac{\partial g_{ij}}{\partial y_\ell} \right)\,\, .
\end{equation}
\end{proof}
The next lemma is elementary:
\begin{lemma}\label{lemma-tau} Let $\varphi_{\alpha^*}$ be a map as in \eqref{rotationallysymmetricmaps}. Then
\begin{equation}\label{tauvarphi-alpha*}
\tau(\varphi_{\alpha^*}) = \,F(\alpha^*) \frac{\partial}{\partial \alpha}
\end{equation}
where we have set
\begin{equation}\label{definizionediF}
F(\alpha^*) = - \,(n-1) \, \sin \alpha^*\, \cos \alpha^* \,\, .
\end{equation}
\end{lemma}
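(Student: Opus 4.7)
The plan is to compute $\tau(\varphi_{\alpha^*})$ directly from the local coordinate formula
$$
(\tau(\varphi))^\gamma = g_M^{ij}\,(\nabla d\varphi)^\gamma_{ij}=g_M^{ij}\!\left[\varphi^\gamma_{ij}-{}^M\Gamma^k_{ij}\varphi^\gamma_k+{}^N\Gamma^\gamma_{\beta\delta}\varphi^\beta_i\varphi^\delta_j\right]
$$
combined with the table of Christoffel symbols of Lemma~\ref{lemma-christoffels}. On the domain I take the standard metric $g_{\s^{n-1}}$ (the conformal factor $\sin^2\alpha^*$ coming from the induced metric has already been absorbed by the remark about invariance of $r$-harmonicity under homothetic rescaling of $g_M$) and local coordinates $w_1,\dots,w_{n-1}$, while on the codomain I use the coordinates $w_1,\dots,w_{n-1},\alpha$ in which the metric has the warped form \eqref{metric-s^n}.

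In these coordinates the map \eqref{rotationallysymmetricmaps} reads $\varphi^j=w^j$ for $1\le j\le n-1$ and $\varphi^n\equiv\alpha^*$, so $\varphi^\gamma_{ij}=0$ identically, $\varphi^j_i=\delta^j_i$ for $1\le i,j\le n-1$, and $\varphi^n_i=0$. Substituting these into the formula for the tension field, I would split the computation according to the two types of components of $\gamma$. For the horizontal components $1\le \gamma=k\le n-1$, the quadratic term reduces to ${}^N\Gamma^k_{ij}$ (since $\varphi^\beta_i$ vanishes unless $\beta\le n-1$, and in that case equals $\delta^\beta_i$), and the linear term reduces to ${}^M\Gamma^k_{ij}$; by item (i) of Lemma~\ref{lemma-christoffels} we have ${}^N\Gamma^k_{ij}={}^{\s}\Gamma^k_{ij}={}^M\Gamma^k_{ij}$, so these cancel and $(\tau(\varphi_{\alpha^*}))^k=0$.

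For the vertical component $\gamma=n$, the first two terms of \eqref{seconda-forma-coordinate-locali-bis} vanish because $\varphi^n_{ij}=0$ and $\varphi^n_k=0$, while again only the pair $(\beta,\delta)$ with $1\le\beta,\delta\le n-1$ contributes, giving
$$
(\tau(\varphi_{\alpha^*}))^n=g_{\s^{n-1}}^{ij}\,{}^N\Gamma^n_{ij}\Big|_{\alpha=\alpha^*}.
$$
By item (ii) of Lemma~\ref{lemma-christoffels}, ${}^N\Gamma^n_{ij}\big|_{\alpha^*}=-\sin\alpha^*\cos\alpha^*\,(g_{\s^{n-1}})_{ij}$, so the trace collapses to $-(n-1)\sin\alpha^*\cos\alpha^*=F(\alpha^*)$, and the components (iii)–(iv) of the lemma are never invoked because $\varphi^n_i=0$. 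Combining the two cases gives $\tau(\varphi_{\alpha^*})=F(\alpha^*)\,\partial/\partial\alpha$ as claimed.

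There is no serious obstacle: the proof is essentially just a substitution, and the only bookkeeping point that deserves attention is keeping domain and codomain Christoffel symbols separate and confirming that the mixed terms involving ${}^N\Gamma^\gamma_{n\delta}$ or ${}^N\Gamma^\gamma_{\beta n}$ cannot appear because $\varphi^n_i\equiv 0$.
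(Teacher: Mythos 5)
Your computation is correct: the paper states this lemma without proof (calling it ``elementary''), and your argument is exactly the substitution it intends, using the local coordinate formula \eqref{tau-coordinate-locali} together with items (i) and (ii) of Lemma~\ref{lemma-christoffels}, with the horizontal components cancelling and the vertical component tracing to $-(n-1)\sin\alpha^*\cos\alpha^*$. Your remark that the domain carries the standard metric $g_{\s^{n-1}}$ (not the induced one) is also consistent with how the paper uses this lemma in Lemmata~\ref{lemma-dtau}--\ref{lemma-r-energy}.
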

Now we prove three lemmata which will play a key role:
\begin{lemma}\label{lemma-dtau}Let $\varphi_{\alpha^*}$ be a map as in \eqref{rotationallysymmetricmaps}. Then
\begin{equation}\label{dtauvarphi-alpha*}
d\tau(\varphi_{\alpha^*}) = G(\alpha^*) \,\, \sum_{i=1}^{n-1}\,\,dw^i \otimes \frac{\partial}{\partial w_i} \,\, ,
\end{equation}
where we have set
\begin{equation}\label{definizionediG}
G(\alpha^*) = F(\alpha^*)\,\, \frac{\cos \alpha^*}{\sin \alpha^*}=\,- \,(n-1)\,\cos^2 \alpha^* \,\, .
\end{equation}
\end{lemma}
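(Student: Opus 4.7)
The object $d\tau(\varphi_{\alpha^*})$ should be read as the covariant differential $\nabla^{\varphi_{\alpha^*}}\tau(\varphi_{\alpha^*})$, a section of $T^*\s^{n-1}\otimes \varphi_{\alpha^*}^{-1}T\s^n$. My plan is to evaluate it on the coordinate basis $\{\partial/\partial w_i\}$ and match the result with the right-hand side of \eqref{dtauvarphi-alpha*}.

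First I would use Lemma~\ref{lemma-tau} to write
\[
\nabla^{\varphi_{\alpha^*}}_{\partial/\partial w_i}\tau(\varphi_{\alpha^*})
=\nabla^{\varphi_{\alpha^*}}_{\partial/\partial w_i}\left(F(\alpha^*)\,\frac{\partial}{\partial\alpha}\right).
\]
Since $\alpha^*$ is a fixed constant the scalar $F(\alpha^*)$ pulls out of the derivative, and by the very definition of the pull-back connection together with the fact (noted just before Lemma~\ref{lemma-christoffels}) that $d\varphi_{\alpha^*}(\partial/\partial w_i)=\partial/\partial w_i$, this reduces to
\[
F(\alpha^*)\,\nabla^{\s^n}_{\partial/\partial w_i}\frac{\partial}{\partial\alpha},
\]
evaluated along the image $\varphi_{\alpha^*}(\s^{n-1})$, i.e.\ at $\alpha=\alpha^*$.

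Next I would expand the ambient covariant derivative using the Christoffel symbols tabulated in Lemma~\ref{lemma-christoffels}: entries (iii) and (iv) give $\Gamma^j_{in}=(\cos\alpha/\sin\alpha)\,\delta^j_i$ and $\Gamma^n_{in}=0$, so
\[
\nabla^{\s^n}_{\partial/\partial w_i}\frac{\partial}{\partial\alpha}
=\frac{\cos\alpha^*}{\sin\alpha^*}\,\frac{\partial}{\partial w_i}.
\]
Multiplying by $F(\alpha^*)$ and using the explicit formula \eqref{definizionediF} yields
\[
\nabla^{\varphi_{\alpha^*}}_{\partial/\partial w_i}\tau(\varphi_{\alpha^*})
=F(\alpha^*)\,\frac{\cos\alpha^*}{\sin\alpha^*}\,\frac{\partial}{\partial w_i}
=-(n-1)\cos^2\alpha^*\,\frac{\partial}{\partial w_i}=G(\alpha^*)\,\frac{\partial}{\partial w_i},
\]
which is exactly the $i$-th component of the 1-form in \eqref{dtauvarphi-alpha*}. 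Summing over $i$ gives the stated formula.

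There is no genuine obstacle: the only point requiring a moment's attention is the bookkeeping between the fixed value $\alpha^*$ (so that $F$ is a constant and may be pulled through $\nabla^{\varphi_{\alpha^*}}$) and the ambient Christoffel symbol $\Gamma^j_{in}$, which depends on $\alpha$ but must be evaluated at $\alpha=\alpha^*$ since the computation takes place on $\varphi_{\alpha^*}(\s^{n-1})$. Once this is handled correctly, the identification of $G(\alpha^*)$ as $F(\alpha^*)\cos\alpha^*/\sin\alpha^*$ and the simplification to $-(n-1)\cos^2\alpha^*$ are immediate.
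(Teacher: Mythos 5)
Your proof is correct and follows essentially the same route as the paper: both evaluate $d\tau(\varphi_{\alpha^*})$ on the coordinate fields $\partial/\partial w_i$, pull the constant $F(\alpha^*)$ through the pull-back connection, and apply the Christoffel symbols $\Gamma^j_{in}$ and $\Gamma^n_{in}$ from Lemma~\ref{lemma-christoffels} to obtain $G(\alpha^*)\,\partial/\partial w_i$. Your remark on evaluating the $\alpha$-dependent Christoffel symbol at $\alpha=\alpha^*$ is a point the paper leaves implicit, but there is no substantive difference.
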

\begin{proof} $d\tau(\varphi_{\alpha^*})$ is a section of $T^*M\otimes \varphi ^{-1}TN$.  Equation \eqref{dtauvarphi-alpha*} tells us that we just have to verify that
\begin{equation}\label{dtauvarphi-alpha*-bis}
\left (d\tau(\varphi_{\alpha^*})\right )^i_i =\,F(\alpha^*)\,\, \frac{\cos \alpha^*}{\sin \alpha^*} \qquad \quad {\rm if } \,\, 1 \leq i \leq n-1
\end{equation}
and
\begin{equation}\label{dtauvarphi-alpha*-tris}
\left (d\tau(\varphi_{\alpha^*})\right )^i_j = 0\qquad \quad {\rm whenever } \,\, i \neq j \,\,.
\end{equation}
Now, \eqref{dtauvarphi-alpha*-bis} and \eqref{dtauvarphi-alpha*-tris} are a simple consequence of the following:
\begin{eqnarray}\label{derivatadidtau}
d\tau(\varphi_{\alpha^*})\left( \frac{\partial}{\partial w_i} \right)&=& \nabla^\varphi_{\partial \slash \partial w_i}\,\tau(\varphi_{\alpha^*})\nonumber \\ 
&=&\nabla^\varphi_{\partial \slash \partial w_i}\,F(\alpha^*) \frac{\partial}{\partial \alpha}= F(\alpha^*)\,\nabla^\varphi_{\partial \slash \partial w_i}\, \frac{\partial}{\partial \alpha} \\ \nonumber
&=&F(\alpha^*)\,\left[ \Gamma^k_{in}\,\,\frac{\partial}{\partial w_k}+\, \Gamma^n_{in}\,\,\frac{\partial}{\partial \alpha}\right ]\\ \nonumber
&=&F(\alpha^*)\,\,\frac{\cos \alpha^*}{\sin \alpha^*}\,\,\delta^k_i  \,\frac{\partial}{\partial w_k}=F(\alpha^*)\,\frac{\cos \alpha^*}{\sin \alpha^*} \,\frac{\partial}{\partial w_i} \,\, ,
\end{eqnarray}
where, in \eqref{derivatadidtau}, we have used the explicit expression of the Christoffel symbols given in Lemma \ref{simb-christ}.
\end{proof}

\begin{lemma}\label{lemma-d*dtau}Let $\varphi_{\alpha^*}$ be a map as in \eqref{rotationallysymmetricmaps}. Then
\begin{equation}\label{d*dtauvarphi-alpha*}
d^*d\left(\tau(\varphi_{\alpha^*})\right ) = H(\alpha^*) \,\,  \frac{\partial}{\partial \alpha} \,\, ,
\end{equation}
where
\begin{equation}\label{definizionediH}
H(\alpha^*) = (n-1)\, G(\alpha^*)\,\, \sin \alpha^* \, \cos \alpha^* \,\, .
\end{equation}
\end{lemma}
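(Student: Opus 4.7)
The plan is to compute $\overline{\Delta}\tau(\varphi_{\alpha^*})$ directly in the local coordinates $w_1,\dots,w_{n-1}$ on $\s^{n-1}$ by combining Lemma~\ref{lemma-dtau} (which already furnishes the first covariant derivatives of $\tau(\varphi_{\alpha^*})$) with the Christoffel symbols of Lemma~\ref{lemma-christoffels}. I would write the rough Laplacian in coordinate form rather than via an orthonormal frame:
\begin{equation*}
\overline{\Delta} V \,=\, - g_M^{ij}\,\nabla^\varphi_{\partial \slash \partial w_i} \nabla^\varphi_{\partial \slash \partial w_j} V \,+\, g_M^{ij}\,{}^{M}\Gamma^k_{ij}\,\nabla^\varphi_{\partial \slash \partial w_k}V\,,
\end{equation*}
applied to $V=\tau(\varphi_{\alpha^*})$. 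Here $g_M$ is the standard metric on $\s^{n-1}$, so $g_M^{ij}=(g_\s)^{ij}$ and ${}^{M}\Gamma^k_{ij}={}^{\s}\Gamma^k_{ij}$.

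The first step is to use Lemma~\ref{lemma-dtau} to conclude that $\nabla^\varphi_{\partial \slash \partial w_j}\tau(\varphi_{\alpha^*})= G(\alpha^*)\,\partial \slash \partial w_j$, a constant multiple of the coordinate vector field $\partial \slash \partial w_j$ viewed as a section of $\varphi^{-1}TN$. The second step is to differentiate this again: since $G(\alpha^*)$ is a constant and $\nabla^\varphi$ is the pull-back of $\nabla^{\s^n}$, one obtains
\begin{equation*}
\nabla^\varphi_{\partial \slash \partial w_i}\nabla^\varphi_{\partial \slash \partial w_j}\tau(\varphi_{\alpha^*}) \,=\, G(\alpha^*)\Bigl[\,\Gamma^k_{ij}\,\frac{\partial}{\partial w_k}\,+\,\Gamma^n_{ij}\,\frac{\partial}{\partial \alpha}\Bigr]
\end{equation*}
and inserting the values $\Gamma^k_{ij}={}^{\s}\Gamma^k_{ij}$ and $\Gamma^n_{ij}=-\sin\alpha^*\cos\alpha^*\,(g_\s)_{ij}$ from Lemma~\ref{lemma-christoffels}(i)--(ii) gives an explicit expression.

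The third step is to trace with $g_\s^{ij}$ and observe the cancellation: the tangential contribution $-g_\s^{ij}\,G(\alpha^*)\,{}^{\s}\Gamma^k_{ij}\,\partial \slash \partial w_k$ from the first term of $\overline{\Delta}V$ exactly cancels the correction term $g_\s^{ij}\,{}^{\s}\Gamma^k_{ij}\,G(\alpha^*)\,\partial \slash \partial w_k$, leaving only the normal component. Using $g_\s^{ij}(g_\s)_{ij}=n-1$, this component reduces to $(n-1)\,G(\alpha^*)\,\sin\alpha^*\cos\alpha^*\,\partial \slash \partial \alpha = H(\alpha^*)\,\partial \slash \partial \alpha$, which is exactly \eqref{d*dtauvarphi-alpha*}.

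I do not anticipate a serious obstacle: the calculation is purely mechanical once Lemmata~\ref{lemma-christoffels} and \ref{lemma-dtau} are in hand. The only place one has to be careful is the sign conventions in the rough Laplacian and the bookkeeping of which indices run over $\{1,\dots,n-1\}$ and which include $n$; the Christoffel identities (iii)--(iv) are not needed at this step because the vector field being differentiated, $G(\alpha^*)\,\partial \slash \partial w_j$, has no $\partial \slash \partial \alpha$ component and is differentiated in the $w$-directions only. The transparent cancellation of the tangential part is the conceptual content of the lemma and confirms that $\overline{\Delta}\tau(\varphi_{\alpha^*})$ remains collinear with $\partial \slash \partial \alpha$, as one expects from the rotational symmetry of the map.
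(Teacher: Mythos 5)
Your proposal is correct and follows essentially the same route as the paper: a direct coordinate computation combining Lemma~\ref{lemma-dtau} with the Christoffel symbols (i)--(ii) of Lemma~\ref{lemma-christoffels}, traced against $g_\s^{ij}$. The paper packages the same cancellation as $d^*$ applied to the $1$-form $d\tau(\varphi_{\alpha^*})=G(\alpha^*)\,dw^\ell\otimes\partial\slash\partial w_\ell$ (the $-{}^{\s}\Gamma$ from the form part cancelling the tangential $+\Gamma$ from the pull-back connection), which by \eqref{roughlaplacian} is identical to your rough-Laplacian bookkeeping, and both yield $H(\alpha^*)=(n-1)\,G(\alpha^*)\sin\alpha^*\cos\alpha^*$.
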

\begin{proof}By using Lemma \ref{simb-christ} and its notation, we compute (see \cite{EL83}):
\begin{eqnarray}\label{formula-lemma}
\sum_{\ell=1}^{n-1}\left (\nabla_{\partial \slash \partial w_i} \left( dw^\ell \otimes \frac{\partial }{\partial w_\ell}\right )\right)&=&-\,{}^{\s}\Gamma^{\gamma}_{ik}\, dw^k \otimes \frac{\partial }{\partial w_\gamma} \nonumber \\ 
&& +\,\Gamma^{\beta}_{i\gamma}\,dw^\gamma \otimes \frac{\partial }{\partial w_\beta}
+\,\Gamma^{n}_{i\gamma}\,dw^\gamma \otimes \frac{\partial }{\partial \alpha} \\ \nonumber
&=&\Gamma^{n}_{i\gamma}\,dw^\gamma \otimes \frac{\partial }{\partial \alpha}= - \sin \alpha^* \cos \alpha^* \,(g_{\s})_{i \gamma}\,dw^\gamma \otimes \frac{\partial }{\partial \alpha} \,\,.
\end{eqnarray}
Next, by using Lemma \ref{lemma-dtau} and \eqref{formula-lemma}, we obtain
\begin{eqnarray}
d^*d\left(\tau(\varphi_{\alpha^*})\right ) &=&-\, (g_{\s}^{-1})^{ij}\,\sum_{i,\ell=1}^{n-1}\, \left ( G(\alpha^*)\,
\nabla_{\partial \slash \partial w_i} \left( dw^\ell \otimes \frac{\partial }{\partial w_\ell}\right ) \,\frac{\partial }{\partial w_j}\right ) \nonumber \\
&=& (n-1)\,G(\alpha^*)\,\, \sin \alpha^* \, \cos \alpha^* \,\frac{\partial }{\partial \alpha}\,\,,  \\ \nonumber
\end{eqnarray}
so ending the proof of this lemma.
\end{proof}
For future use, we also note that \eqref{definizionediH} can be rewritten as:
$$
H(\alpha^*) = (n-1)\, G(\alpha^*)\sin \alpha^* \, \cos \alpha^* =- \,(n-1)^2\,F(\alpha^*) \,\cos^2 \alpha^* \,\, ,
$$
where $F(\alpha^*)$ is as in \eqref{definizionediF}.
\begin{lemma}\label{lemma-r-energy}Let $r \geq2$ and $\varphi_{\alpha^*}$ be a map as in \eqref{rotationallysymmetricmaps}. Then its $r$-energy is given by:
\begin{equation}\label{d*dtauvarphi-alpha*-energy}
E_r(\varphi_{\alpha^*})= \, \frac{1}{2}\,{\rm Vol}\,(\s^{n-1})\,(n-1)^r\,\varepsilon_r (\alpha^*) \,\, ,
\end{equation}
where $\varepsilon_r (\alpha)$ is the function defined in \eqref{condizione-r-harmonicity}.
\end{lemma}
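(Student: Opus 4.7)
The strategy is to iterate Lemmata \ref{lemma-dtau}--\ref{lemma-d*dtau} to compute $\overline{\Delta}^{k}\tau(\varphi_{\alpha^*})$ in closed form, then to substitute into the two defining expressions \eqref{2s-energia} and \eqref{2s+1-energia} for the $r$-energy (treating the cases $r=2s$ and $r=2s+1$ separately). Since the resulting integrand turns out to be constant along $\s^{n-1}$, integration over $(\s^{n-1},g_{\s^{n-1}})$ reduces to multiplication by $\mathrm{Vol}(\s^{n-1})$.

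The heart of the argument is the iteration. Inspecting the proofs of Lemmata \ref{lemma-dtau} and \ref{lemma-d*dtau}, the only property of the coefficient $F(\alpha^*)$ that is actually used is that it is constant along the $w_{j}$-directions. Thus, for any function $c=c(\alpha^*)$ depending only on $\alpha^*$,
\begin{equation*}
\overline{\Delta}\bigl(c(\alpha^*)\,\partial/\partial\alpha\bigr) \, = \, (n-1)\,c(\alpha^*)\,\cos^{2}\alpha^*\,\partial/\partial\alpha,
\end{equation*}
and induction on $k$ yields
\begin{equation*}
\overline{\Delta}^{k}\tau(\varphi_{\alpha^*}) \, = \, F(\alpha^*)\,[(n-1)\cos^{2}\alpha^*]^{k}\,\partial/\partial\alpha, \qquad k \geq 0.
\end{equation*}

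If $r=2s$, then \eqref{metric-s^n} gives $|\partial/\partial\alpha|_{g_{\s^{n}}}=1$, so \eqref{2s-energia} has pointwise integrand $F(\alpha^*)^{2}[(n-1)\cos^{2}\alpha^*]^{2(s-1)}$; substituting $F(\alpha^*)^{2}=(n-1)^{2}\sin^{2}\alpha^*\cos^{2}\alpha^*$ collapses this to $(n-1)^{r}\varepsilon_{r}(\alpha^*)$, and integrating over $\s^{n-1}$ yields \eqref{d*dtauvarphi-alpha*-energy}. If instead $r=2s+1$, I would repeat the computation of Lemma \ref{lemma-dtau} on the section $\overline{\Delta}^{s-1}\tau$ to get
\begin{equation*}
d\bigl(\overline{\Delta}^{s-1}\tau\bigr)\bigl(\partial/\partial w_{i}\bigr) \, = \, F(\alpha^*)\,[(n-1)\cos^{2}\alpha^*]^{s-1}\,\frac{\cos\alpha^*}{\sin\alpha^*}\,\partial/\partial w_{i},
\end{equation*}
and then evaluate the squared norm via $|d\sigma|^{2}=g_{\s^{n-1}}^{ij}\,g_{\s^{n}}\bigl(d\sigma(\partial/\partial w_{i}),d\sigma(\partial/\partial w_{j})\bigr)$, using $g_{\s^{n}}(\partial/\partial w_{i},\partial/\partial w_{j})=\sin^{2}\alpha^*\,(g_{\s^{n-1}})_{ij}$ from \eqref{metric-s^n}. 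A short calculation shows that the same constant $(n-1)^{r}\varepsilon_{r}(\alpha^*)$ is recovered, and integrating finishes the lemma.

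The only nontrivial bookkeeping occurs in the odd case: one must check that the factor $1/\sin\alpha^*$ coming from the Christoffel symbol $\Gamma^{j}_{in}=(\cos\alpha^*/\sin\alpha^*)\delta_{i}^{j}$ combines correctly with the factor $\sin^{2}\alpha^*$ from the codomain metric on the $\partial/\partial w_{i}$-block and the trace $g_{\s^{n-1}}^{ij}(g_{\s^{n-1}})_{ij}=n-1$, so as to produce exactly the exponent $2(r-1)$ on $\cos\alpha^*$ and the factor $(n-1)^{r}$ in the final answer. The even case is cleaner because $\partial/\partial\alpha$ already has unit length in $g_{\s^{n}}$, so no such balancing is needed.
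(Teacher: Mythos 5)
Your proposal is correct and follows essentially the same route as the paper: both proofs iterate the computations of Lemmata \ref{lemma-tau}--\ref{lemma-d*dtau} (the paper computes $E_2,E_3,E_4$, observes the recursion $E_{r+1}=(n-1)\cos^2\alpha^*\,E_r$, and inducts, while you make the iteration explicit via the closed form $\overline{\Delta}^{k}\tau=F(\alpha^*)\,[(n-1)\cos^2\alpha^*]^{k}\,\partial/\partial\alpha$ and substitute into \eqref{2s-energia} and \eqref{2s+1-energia} separately). Your version is, if anything, slightly more detailed than the paper's in justifying the even/odd bookkeeping, and the norm computations check out.
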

\begin{proof} The proof reduces to an iteration of the calculations which we have performed in Lemmata \ref{lemma-tau}, \ref{lemma-dtau}, \ref{lemma-d*dtau}. We have:
\begin{eqnarray}\label{E2-lemma}
E_2(\varphi_{\alpha^*})&=&\frac{1}{2}\,\int_{\s^{n-1}} \, |\tau(\varphi_{\alpha^*})|^2\, dv_{\s^{n-1}}= \frac{1}{2}\, {\rm Vol}\,(\s^{n-1})\,\, F^2(\alpha^*)\nonumber\\ 
&=&\frac{1}{2}\, {\rm Vol}\,(\s^{n-1})\,\,(n-1)^2 \,\, \varepsilon_2 (\alpha^*) \,\, .
\end{eqnarray}
\begin{eqnarray}\label{E3-lemma}
E_3(\varphi_{\alpha^*})&=& \frac{1}{2}\,\int_{\s^{n-1}} \, |d\tau(\varphi_{\alpha^*})|^2\, dv_{\s^{n-1}}\nonumber\\
&=& \frac{1}{2}\, {\rm Vol}\,(\s^{n-1})\,\,(n-1) \,(\sin^2 \alpha^*)\,G^2(\alpha^*) \nonumber \\ 
&=&\frac{1}{2}\, {\rm Vol}\,(\s^{n-1})\,\,(n-1)\,(\cos^2 \alpha^*)\,F^2(\alpha^*) \\ 
&=&\frac{1}{2}\, {\rm Vol}\,(\s^{n-1})\,\,(n-1)^3 \,\, \varepsilon_3 (\alpha^*) \,\, .\nonumber
\end{eqnarray}
\begin{eqnarray}\label{E4-lemma}
E_4(\varphi_{\alpha^*})&=& \, \frac{1}{2}\,\int_{\s^{n-1}} \, |d^*d \left (\tau(\varphi_{\alpha^*})\right )|^2\, dv_{\s^{n-1}}= \frac{1}{2}\, {\rm Vol}\,(\s^{n-1})\,\, H^2(\alpha^*)\nonumber\\ 
&=&\frac{1}{2}\, {\rm Vol}\,(\s^{n-1})\,\,(n-1)^2\, ( \sin^2 \alpha^*) \, ( \cos^2 \alpha^*)\,
G^2(\alpha^*) \\ \nonumber
&=&\frac{1}{2}\, {\rm Vol}\,(\s^{n-1})\,\,(n-1)^2\, (\cos^4 \alpha^*)\, F^2(\alpha^*) \\ \nonumber
&=&\frac{1}{2}\, {\rm Vol}\,(\s^{n-1})\,\,(n-1)^4 \,\, \varepsilon_4 (\alpha^*) \,\, .
\end{eqnarray}
Now the iterative procedure can be made explicit and we recognize the pattern
\begin{equation}\label{pattern-iteration}
E_{r+1}(\varphi_{\alpha^*})=(n-1)\, \cos^2 \alpha^* \,\,E_{r}(\varphi_{\alpha^*})
\end{equation}
from which the conclusion follows by induction.
\end{proof}
\begin{remark} The conclusion of Lemma \ref{lemma-r-energy} is also true for $r=1$, but this is not of interest for our purposes.
\end{remark}
Now we can end the proof of Proposition \ref{main-theor-parallels}:
\begin{proof} According to Lemma \ref{lemma-r-energy}, $\varepsilon_r({\alpha^*})$ is, up to a constant, the $r$-energy of $\varphi_{\alpha^*}$. Also, we observe that $G={\rm SO}(n)$ acts naturally by isometries on both the domain and the codomain of $\varphi_{\alpha^*}$, and $\varphi_{\alpha^*}$ is $G$-equivariant. Therefore, if $\alpha^*$ is a critical point of $\varepsilon_r({\alpha})$, then $\varphi_{\alpha^*}$ is a critical point of the $r$-energy functional with respect to equivariant variations. But the $r$-energy functional is invariant by isometries and therefore the principle of symmetric criticality of Palais (\cite{Palais}) can be applied to conclude that actually $\varphi_{\alpha^*}$ is a critical point of the $r$-energy functional with respect to all variations. However, in order to avoid the use of Palais's general principle, we provide here an alternative, more explicit way to end the proof. Namely, we verify directly that \begin{equation}\label{tau-ben-messo}
    \tau_r(\varphi_{\alpha^*}) =\overline{T_r}(\alpha^*) \,\, \frac{\partial}{\partial \alpha} \quad {\rm for \,\, all }\quad r \geq2
\end{equation}
for some functions $\overline{T_r}(\alpha)$, a fact which is clearly sufficient to end the proof. To this purpose, we first observe that, computing as in Lemma \ref{lemma-d*dtau}, we have
\begin{equation}\label{Delta-iterato-ok}
\overline{\Delta}^s \, \tau(\varphi_{\alpha^*}) = \overline{H}_s(\alpha^*)\, \frac{\partial}{\partial \alpha}
\end{equation}
for some $\overline{H}_s(\alpha^*)$ whose explicit expressions play no role in the sequel. Moreover, let $\left \{ e_j \right \}_{j=1}^{n-1}$ be a local orthonormal frame on $\s^{n-1}$ (we have $d\varphi_{\alpha^*}(e_j)=e_j$ but note that, when $e_j$ is considered as a tangent vector to $\s^n$, $\langle e_j,e_j \rangle= \sin^2 \alpha  \,\, {\rm and} \,\,  \langle e_j,(\partial \slash \partial \alpha)\rangle=0 
$). By writing $e_j$ as a linear combination of the coordinate frame fields $\partial \slash \partial w_i$'s and using Lemma \ref{simb-christ} it is easy to check that
\begin{equation}\label{nabla-Delta-iterato-ok}
\nabla^\varphi_{e_j}\, \left (\overline{\Delta}^s \, \tau(\varphi_{\alpha^*}) \right ) = \overline{G}_s(\alpha^*) \,\, e_j
\end{equation}
for some $\overline{G}_s(\alpha^*)$ whose explicit expressions play no role in the sequel. Let us denote by $R(X,Y)Z$ the Riemann sectional curvature tensor of the codomain $\s^n$. Inserting the information \eqref{Delta-iterato-ok}, \eqref{nabla-Delta-iterato-ok} into the explicit formulas for $\tau_r$ (see \eqref{2s-tension} and \eqref{2s+1-tension}) we conclude that we only need to check that both
\begin{equation}\label{richieste-su-R}
{\rm (i)}\quad R \left(\frac{\partial}{\partial \alpha} ,e_j \right)\,e_j \qquad {\rm (ii)}\quad R \left(e_j,\frac{\partial}{\partial \alpha}   \right)\,e_j
\end{equation}
have the form $c(\alpha^*) \,\partial \slash \partial \alpha$, where $c(\alpha^*)$ is a real number which depends only on $\alpha^*$ (because of the symmetries of $R$, it actually suffices to verify this claim in the case \eqref{richieste-su-R}(i)). To this purpose, by using the standard expression for the curvature operator on the sphere we obtain:
$$
R \left(\frac{\partial}{\partial \alpha} ,e_j  \right)\,e_j = \langle e_j,e_j \rangle \,\frac{\partial}{\partial \alpha} -\langle e_j,\frac{\partial}{\partial \alpha}  \rangle \,e_j= (\sin \alpha^*)^2\, \, \frac{\partial}{\partial \alpha}  \,\, ,
$$
so ending the proof.
\end{proof}
\begin{remark}\label{remark-tau-esplicito} It is actually possible to have the explicit expression of the $r$-tension field \eqref{tau-ben-messo} by means of the following argument. Let us consider the following variation:
\begin{equation}\label{variation-good}
    \varphi_{\alpha^*,t}= (w,\alpha^*+t) \,\,.
\end{equation}
Clearly,
\begin{equation}\label{variation-good-bis}
  \left . \frac{\partial \varphi_{\alpha^*,t}}{\partial t}\right |_{t=0}= \frac{\partial}{\partial \alpha}\,\,.
\end{equation}
Then, because of \eqref{r-energia-tension} (which also holds when $r$ is odd), \eqref{tau-ben-messo} yields:
\begin{eqnarray}\label{variation-good-tris}
\left .\frac{d}{dt}\, E_{r}(\varphi_{\alpha^*,t}) \right |_{t=0}\,&=& \,-\,\int_{\s^{n-1}} \, \langle \,\tau_{r}(\varphi_{\alpha^*}), \frac{\partial}{\partial \alpha}\, \rangle_{_N}\, \,dv_{\s^{n-1}}  \\ \nonumber
&=& -\,{\rm Vol}(\s^{n-1}) \, \overline{T_r}(\alpha^*) \,\, . 
\end{eqnarray}
On the other hand, direct calculation by using \eqref{d*dtauvarphi-alpha*-energy} yields:
\begin{eqnarray}\label{variation-good-tris+1}
\left .\frac{d}{dt} \, E_{r}(\varphi_{\alpha^*,t}) \,\right |_{t=0}&=& \,\frac{1}{2}\,\int_{\s^{n-1}} \,(n-1)^r \, \left .\frac{d\,\varepsilon_r(\alpha^*+t)}{dt}\right |_{t=0}\,dv_{\s^{n-1}} \nonumber \\ 
&=& \,\frac{1}{2}\,{\rm Vol}(\s^{n-1}) \,(n-1)^r \, \varepsilon_r'\,(\alpha^*) \,\, .
\end{eqnarray}
Comparing \eqref{variation-good-tris} and \eqref{variation-good-tris+1} we conclude immediately that
\begin{equation}\label{tau-ben-messo-esplicito}
    \tau_r(\varphi_{\alpha^*}) =\,-\, \left [\frac{(n-1)^r}{2} \, \right ]\,\,\varepsilon_r'\,(\alpha^*) \,\, \frac{\partial}{\partial \alpha} \quad {\rm for \,\, all }\quad r \geq 2 \,\, .
\end{equation}
\end{remark}
\begin{remark} An idea similar to Proposition \ref{main-theor-parallels} was used by Hsiang and Lawson (see \cite{HL} and \cite{ER} for related matters) in the context of cohomogeneity zero minimal immersions. In their case, a Lie group $G$ acts isometrically on the ambient space with principal orbits of codimension one, and the search of $G$-invariant minimal immersions reduces to finding critical points of the Volume function which, as in our case, just depends on one real variable.
\end{remark}

\subsection{The proof of Theorem \ref{Corollary-parallel-spheres-Clifford}} This follows steps similar to Theorem \ref{Corollary-parallel-spheres} and so we just point out the relevant modifications. In this case we have to study maps of the following type:
\begin{equation}\label{def-Clifford-tori}\begin{array}{lcll}
                                           \varphi_{\alpha^*} \,:\, &\s^p(R_1) \times \s^q(R_2) &\to &  \s^{p+q+1} \subset \R^{p+1}\times \R^{q+1}\\
                                           &&& \\
                                           &(R_1\,w,R_2\,z) &\mapsto & ( \sin \alpha^*\,w,  \cos \alpha^*\,z) \,\, ,
                                         \end{array}
\end{equation}
where $w,z$ denote the generic coordinates of a point of $\s^p$ and $\s^q$ respectively and $\alpha^* $ is a chosen value in the interval $(0,(\pi \slash 2))$. The first step is to determine when a map of the type \eqref{def-Clifford-tori} is $r$-harmonic. The version of Proposition \ref{main-theor-parallels} in this context is the following:
\begin{proposition}\label{main-theor-Clifford}Let $\varphi_{\alpha^*}\,: \,\,\s^p(R_1) \times \s^q(R_2) \to \,\s^{p+q+1}$ be as in \eqref{def-Clifford-tori}. Then $\varphi_{\alpha^*}$ is a proper $r$-harmonic map ($r \geq 2$) if and only if
\begin{equation}\label{not-harmonic-Clifford}
   \left [\frac{p}{R_1^2} \,-\,\frac{q}{R_2^2} \right ]\, \neq \, 0
\end{equation}
and $\alpha^*$ is a critical point of the function $\varepsilon^C_r \, : (0,(\pi \slash 2)) \to \R $ defined by
\begin{equation}\label{condizione-r-harmonicity-Clifford}
 \varepsilon^C_r (\alpha)= \sin^2 \alpha \, \cos^2 \alpha \,\,\left [\frac{p}{R_1^2}\,\cos^2 \alpha\,+\,\frac{q}{R_2^2}\,\sin^2 \alpha\right ]^{(r-2)} \,\, .
\end{equation}
\end{proposition}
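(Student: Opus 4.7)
My plan is to mirror the proof of Proposition~\ref{main-theor-parallels}, carrying out carefully the extra bookkeeping that the two factors $\s^p(R_1)$ and $\s^q(R_2)$ force on us. First I would introduce on $\s^{p+q+1}$ local coordinates $w_1,\ldots,w_p,z_1,\ldots,z_q,\alpha$, with $(w,z)$ a point of $\s^p \times \s^q$ and $\alpha\in(0,\pi/2)$, so that the round metric becomes the doubly warped product
\begin{equation*}
g_{\s^{p+q+1}}\,=\,\sin^2\alpha\,\,g_{\s^p}\,+\,\cos^2\alpha\,\,g_{\s^q}\,+\,d\alpha^2\,\, .
\end{equation*}
A direct application of \eqref{formula-simboli}, in the spirit of Lemma~\ref{lemma-christoffels}, then gives the nonvanishing Christoffels $\Gamma^\alpha_{w_iw_j}=-\sin\alpha\cos\alpha\,(g_{\s^p})_{ij}$, $\Gamma^\alpha_{z_iz_j}=\sin\alpha\cos\alpha\,(g_{\s^q})_{ij}$, $\Gamma^{w_j}_{w_i\alpha}=(\cos\alpha/\sin\alpha)\,\delta_i^j$ and $\Gamma^{z_j}_{z_i\alpha}=-(\sin\alpha/\cos\alpha)\,\delta_i^j$, together with the intrinsic ones on each factor; the block-diagonal structure of the metric kills all mixed $w$--$z$ Christoffels, which is what will allow the iteration below to split cleanly into the two factors.

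Next I exploit the fact that $\varphi_{\alpha^*}$ has coordinate expression $(w,z)\mapsto(w,z,\alpha^*)$, so $d\varphi_{\alpha^*}$ is the identity on tangential directions. Tracing $\nabla d\varphi_{\alpha^*}$ with the domain metric $g_M=R_1^2\,g_{\s^p}+R_2^2\,g_{\s^q}$ separates into a $w$-sum and a $z$-sum, producing the factors $p/R_1^2$ and $q/R_2^2$ respectively, and I obtain the analog of Lemma~\ref{lemma-tau}:
\begin{equation*}
\tau(\varphi_{\alpha^*})\,=\,F^C(\alpha^*)\,\frac{\partial}{\partial\alpha}\,\,,\qquad F^C(\alpha^*)\,=\,\sin\alpha^*\cos\alpha^*\,\left[\frac{q}{R_2^2}-\frac{p}{R_1^2}\right]\,\, .
\end{equation*}
Since $\alpha^*\in(0,\pi/2)$, the vanishing of $F^C$ is equivalent to \eqref{condizione-minimalita}, so non-minimality is exactly the hypothesis \eqref{not-harmonic-Clifford}. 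Iterating the computations of Lemmata~\ref{lemma-dtau} and~\ref{lemma-d*dtau} on each factor (the block diagonality ensuring no cross terms), one obtains inductively
\begin{equation*}
\overline{\Delta}^s\,\tau(\varphi_{\alpha^*})\,=\,F^C(\alpha^*)\,K(\alpha^*)^s\,\frac{\partial}{\partial\alpha}\,\,,\qquad K(\alpha^*)\,=\,\frac{p}{R_1^2}\cos^2\alpha^*+\frac{q}{R_2^2}\sin^2\alpha^*\,\, ,
\end{equation*}
and that $\nabla^\varphi_{e_j}\overline{\Delta}^s\tau(\varphi_{\alpha^*})$ is a function of $\alpha^*$ times the tangential $e_j$. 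Exactly as in the proof of Proposition~\ref{main-theor-parallels}, the constant-curvature identity $R^{\s^{p+q+1}}(\partial/\partial\alpha,e_j)e_j=\langle e_j,e_j\rangle_N\,(\partial/\partial\alpha)$ plugged into \eqref{2s-tension}--\eqref{2s+1-tension} then forces $\tau_r(\varphi_{\alpha^*})=\overline{T_r}(\alpha^*)\,\partial/\partial\alpha$ for all $r\geq 2$.

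To close, integrating $|\overline{\Delta}^{s-1}\tau|^2$ or $|\nabla^\varphi_{e_j}\overline{\Delta}^{s-1}\tau|^2$ produces $E_r(\varphi_{\alpha^*})=c\,\varepsilon^C_r(\alpha^*)$, where $c=(1/2)\,{\rm Vol}(\s^p(R_1)\times\s^q(R_2))\,(q/R_2^2-p/R_1^2)^2$ is nonzero precisely under \eqref{not-harmonic-Clifford}; here $(F^C)^2$ contributes the leading $\sin^2\alpha^*\cos^2\alpha^*$ while each extra $\overline{\Delta}$ (or $\nabla^\varphi_{e_j}$ for odd $r$) multiplies by $K(\alpha^*)$. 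Finally, following Remark~\ref{remark-tau-esplicito}, the radial variation $\varphi_{\alpha^*,t}(w,z)=(\sin(\alpha^*+t)\,w,\cos(\alpha^*+t)\,z)$ has initial velocity $\partial/\partial\alpha$; equating $-{\rm Vol}(\s^p(R_1)\times\s^q(R_2))\,\overline{T_r}(\alpha^*)$ with $(d/dt)E_r(\varphi_{\alpha^*,t})|_{t=0}$ identifies $\overline{T_r}(\alpha^*)$ with a nonzero multiple of $(\varepsilon^C_r)'(\alpha^*)$. Hence, under \eqref{not-harmonic-Clifford}, $\varphi_{\alpha^*}$ is $r$-harmonic iff $\alpha^*$ is a critical point of $\varepsilon^C_r$, which is the proposition. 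The principal obstacle I expect is purely organizational: one must verify that the weighting by $p/R_1^2$ versus $q/R_2^2$ appears only from the domain trace, whereas each target-side iteration of $\overline{\Delta}$ always multiplies by the same combination $K(\alpha^*)$; once this separation is isolated, the argument parallels the single-factor case.
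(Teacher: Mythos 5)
Your proposal is correct and follows exactly the route the paper intends: the paper itself gives no detailed proof of Proposition~\ref{main-theor-Clifford}, stating only that it ``follows steps similar to'' the single-sphere case, and your computation of the doubly warped Christoffel symbols, the tension field $F^C(\alpha^*)\,\partial/\partial\alpha$, the iteration factor $K(\alpha^*)$, and the identification of $\overline{T_r}(\alpha^*)$ with a nonzero multiple of $(\varepsilon^C_r)'(\alpha^*)$ via the radial variation supplies precisely the omitted modifications, consistently with Remark~\ref{remark-dopo-main-theorem-clifford}.
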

\begin{remark} If \eqref{not-harmonic-Clifford} holds and $r \geq3$, then the explicit form of the condition $(\varepsilon^C_r)' (\alpha^*)=0$ is equivalent to:
\begin{equation}\label{condizione-r-harmonicity-clifford-esplicita}
     \frac{p}{R_1^2}+ \left [ (r-1)\,\left ( \frac{q}{R_2^2}-\frac{p}{R_1^2}\right )-2\,\frac{p}{R_1^2} \right ] \sin^2 \alpha^* + r\,\left [ \frac{p}{R_1^2}\,-\,\frac{q}{R_2^2}\right ]\sin^4 \alpha^* =0 \,\, .
\end{equation}
\end{remark}
\begin{remark}\label{remark-dopo-main-theorem-clifford} Up to the constant 
$$
c=\frac{1}{2}\, {\rm Vol}\left (\s^p(R_1) \times \s^q(R_2)\right )\, \left [\frac{p}{R_1^2} \,-\,\frac{q}{R_2^2}\right ]^2  
$$
$\varepsilon^C_r (\alpha^*)$ coincides with the $r$-energy of the map $\varphi_{\alpha^*}$ in \eqref{def-Clifford-tori} ($r \geq 2$). In particular, if the left member of \eqref{not-harmonic-Clifford} vanishes, then $\varphi_{\alpha^*}$ is harmonic.
\end{remark}
In the case of maps as in \eqref{def-Clifford-tori} the induced pull-back metric identifies the domain with $\s^p(\sin \alpha^* )\times \s^q(\cos \alpha^*)$. Therefore, in order to ensure that an $r$-harmonic map of type \eqref{def-Clifford-tori} is an \textit{isometric} immersion, it is enough to determine the solutions of \eqref{condizione-r-harmonicity-clifford-esplicita} with $R_1^2=\sin^2 \alpha^* $ and $R_2^2=\cos^2 \alpha^*$. More precisely, by setting $R_1^2=\sin^2 \alpha^*=t$, \eqref{condizione-r-harmonicity-clifford-esplicita} becomes equivalent to the fact that $t$ is a root of the polynomial $P(t)$ in \eqref{r-harmonicity-Clifford} and a straightforward inspection leads us to the end of the proof of Theorem \ref{Corollary-parallel-spheres-Clifford}.


\begin{thebibliography}{99}


\bibitem{CMO02} R.~Caddeo, S.~Montaldo, C.~Oniciuc.
Biharmonic submanifolds in spheres. {\em Israel J. Math.} 130 (2002), 109--123.

\bibitem{Chen} B.-Y.~Chen, {\em Total mean curvature and submanifolds of finite type}. Second edition. Series in Pure Mathematics, 27. World Scientific Publishing Co. Pte. Ltd., Hackensack, NJ, (2015).


\bibitem{EL1} J.~Eells, L.~Lemaire. { Another report on harmonic maps}.
 {\it Bull. London Math. Soc.},  20 (1988), 385--524.

\bibitem{EL83} J.~Eells, L.~Lemaire. {\it  Selected topics in harmonic maps.} CBMS Regional Conference Series in Mathematics, 50. American Mathematical Society, Providence, RI, 1983.

\bibitem{ER} J.~Eells, A.~Ratto. {\it Harmonic Maps and Minimal Immersions with Symmetries.
Methods of Ordinary Differential Equations Applied to Elliptic
Variational Problems}. Annals of Mathematics Studies (133), Princeton University Press, (1993).

\bibitem{ES} J.~Eells, J.H.~Sampson. {Variational theory in fibre bundles}. {\em Proc. U.S.-Japan Seminar in Differential Geometry}, Kyoto (1965), 22--33.

\bibitem{HL} W.-Y. Hsiang, B.~Lawson. Minimal submanifolds of low cohomogeneity. {\em J. Diff. Geom.} 5 (1971), 1--38.

\bibitem{Jiang} G.Y.~Jiang. {2-harmonic maps and their first and second variation formulas}.
{\it Chinese Ann. Math. Ser. A 7},  7 (1986), 130--144.

\bibitem{Maeta1} S.~Maeta. {k-harmonic maps into a Riemannian manifold with constant sectional curvature}.
{\it Proc. A.M.S.},  140 (2012), 1835--1847.

\bibitem{Maeta2} S.~Maeta. {Construction of triharmonic maps}.
{\it Houston J. Math.} 41 (2015), 433--444. 

\bibitem{Maeta3} S.~Maeta. {The second variational formula of the $k$-energy and $k$-harmonic curves}. {\it Osaka J. Math.} 49 (2012), 1035--1063.

\bibitem{Maeta4} S.~Maeta, N.~Nakauchi, H.~Urakawa. {Triharmonic isometric immersions into a manifold of non-positively constant curvature}. {\it Monatsh. Math.} 177 (2015), 551--€"567. 

\bibitem{SMCO} S.~Montaldo, C.~Oniciuc. {A short survey on biharmonic maps between riemannian manifolds}. {\it Rev. Un. Mat. Argentina}, 47 (2006), 1--22.

\bibitem{MOR1} S.~Montaldo, C.~Oniciuc, A.~Ratto. {Proper biconservative immersions into the Euclidean space} {\em Ann. Mat. Pura e Appl.} 195 (2016), 403--422.

\bibitem{Mont-Ratto2} S.~Montaldo, A.~Ratto. Biharmonic submanifolds into ellipsoids. {\em Monatshefte f\"{u}r Mathematik} 176 (2015), 589--601.

\bibitem{Mont-Ratto3} S.~Montaldo, A.~Ratto. A general approach to equivariant biharmonic maps. {\em Med. J. Math.} 10 (2013), 1127--1139.

\bibitem{Palais} R.S.~Palais {The principle of symmetric criticality} {\it Comm. Math. Phys.} 69 (1979), 19--30.

\bibitem{Wang} S.B.~Wang. {The first variation formula for k-harmonic mappings}.
{\it Journal of Nanchang University} 13, N.1 (1989). 

\end{thebibliography}
\end{document}